\newcommand{\shrinkmargins}[1]{
  \addtolength{\textheight}{#1\topmargin}
  \addtolength{\textheight}{#1\topmargin}
  \addtolength{\textwidth}{#1\oddsidemargin}
  \addtolength{\textwidth}{#1\evensidemargin}
  \addtolength{\topmargin}{-#1\topmargin}
  \addtolength{\oddsidemargin}{-#1\oddsidemargin}
  \addtolength{\evensidemargin}{-#1\evensidemargin}
  }
\newtheorem{theorem}{Theorem}
\newtheorem{lemma}[theorem]{Lemma}
\newtheorem{corollary}[theorem]{Corollary}
\newtheorem*{theorem*}{Theorem}
\newtheorem{proposition}[theorem]{Proposition}
{Claim}
\theoremstyle{definition}
\newtheorem*{definition}{Definition}
\theoremstyle{remark}
\newtheorem*{remark}{Remark}
\newtheorem*{remarks}{{\bf Remarks}}
\numberwithin{theorem}{section} \numberwithin{equation}{section}
\def\func#1{\mathop{\rm #1}}%
\newcommand{\norm}[2][]{\ensuremath{\left|\!\left|#2\right|\!\right|_{#1}}}
\newcommand{\floor}[1]{\left\lfloor #1 \right\rfloor}
\begin{document}
\title[Asymptotic Normality]{Asymptotic Normality of the Coefficients of the Morgan-Voyce Polynomials}
\author{Moussa Benoumhani}
\address{Department of Mathematics, University of M'Sila, M'Sila, Algeria}
\email{benoumhani@yahoo.com}
\author{Bernhard Heim }
\address{Lehrstuhl A f\"{u}r Mathematik, RWTH Aachen University, 52056 Aachen, Germany}
\email{bernhard.heim@rwth-aachen.de}
\author{Markus Neuhauser}
\address{Kutaisi International University, 5/7, Youth Avenue,  Kutaisi, 4600 Georgia}
\email{markus.neuhauser@kiu.edu.ge}
\subjclass[2010] {Primary 
60F05,
11B39; 
Secondary 05A16
}
\keywords{Central Limit Theorem, Fibonacci
Numbers, Local Limit Theorem, Singularity Analysis.}
\begin{abstract}
We study arithmetic and asymptotic properties of polynomials provided by
$Q_n(x):= x \sum_{k=1}^n  k \, Q_{n-k}(x)$ with initial value $Q_0(x)=1$.
The coefficients satisfy a central limit theorem and a local limit theorem
involving Fibonacci numbers. We apply methods of Berry and Esseen, Harper,
Bender, and Canfield.
\end{abstract}
\maketitle
\section{Introduction and main results}
Let $g$ be a normalized arithmetic function. Let $G(t):= \sum_{n=1}^{\infty} g(n) \, t^n$ be
regular at $t=0$. We are interested in asymptotic properties of 
the double sequence $a(n,k)$ of coefficients of the polynomials $p_n(x)$ defined by
\begin{equation}\label{Q}
\sum_{n=0}^{\infty} p_n(x) \, t^n = \frac{1}{ 1- x \,  G(t)}.
\end{equation}
Let $g(n)= \frac{1}{n!}$. Then
the coefficients are asymptotically normal \cite{Ha67, Be73}. 
They calculate the number of partitions of an
$n$-set having exactly $k$ labeled blocks. Thus, are equal to $k! \, S_{n,k}$, where $S_{n,k}$ are
the Stirling numbers of the second kind (\cite{Be73}, section 3 applications).
It is obvious that the case
$g(n)=1$ leads to the Central Limit Theorem 
by de Moivre--Laplace, 
since $$a(n,k)= \binom{n-1}{k-1}$$ (cf.\ \cite{HNT20}).

In this paper we study the case $g(n)=n$ in detail and show that the
underlying expected values and variances involve Fibonacci numbers.

We denote the associated polynomials and their coefficients by
$$Q_n(x)= \sum_{k=1}^n A_{n,k} \, x^k.$$
Let $B_n(x)$ denote the Morgan-Voyce polynomials (\cite{Ko01}, Chapter 41). Then $Q_{n+1}(x) = x \, B_n(x)$.
In 1959, Morgan-Voyce \cite{MV59} discovered importance of $B_n(x)$ in the study of electric ladder networks of resistors.
Further, in 1967 and 1968,
several papers appeared by Swamy, Basin, Hoggatt, Jr., and Bicknell.
These polynomials are closely related to Fibonacci polynomials.
From \cite{HNT20}, we know that
\begin{equation*}
A_{n,k}:=  \binom{n+
k-1}{2k-1}, \qquad n \in \mathbb{N}, \, 0 \leq k \leq n.
\end{equation*}
The definition (\ref{Q}) is equivalent to $Q_n(x)= x \sum_{k=1}^n k Q_{n-k}(x)$ with initial value $Q_0(x)=1$.
This hereditary recurrence relation
can be reduced to a three term recurrence 
relation
\begin{equation}\label{3term}
Q_{n+2}(x)-(2+x)Q_{n+1}(x) + Q_n(x)=0, \qquad n \geq 0,
\end{equation}
with initial values 
$Q_1(x)=x$ and $Q_{2}=\left( x+2\right) x$. 
The double sequence $A_{n,k}$ (see Table~\ref{Table1}) is recorded in Sloane's data base 
as A078812.

\begin{table}[H]
\[
\begin{array}{r|rrrrrrrrrrrr}
\hline
n\backslash k & 1 & 2 & 3 & 4 & 5 & 6 & 7 & 8 \\ \hline \hline
1 & 1 & & & & & & &  \\
2 & 2 & 1 & & & & & & \\
3 & 3 & 4 & 1 & & & & & \\
4 & 4 & 10 & 6 & 1 & & & & \\
5 & 5 & 20 & 21 & 8 & 1 & &  \\
6 & 6 & 35 & 56 & 36 & 10 & 1 & & \\
7 & 7 & 56 & 126 & 120 & 55 & 12 & 1 &  \\
8 & 8 & 84 & 252 & 330 & 220 & 78 & 14 & 1  \\
\hline
\end{array}
\]
\caption{\label{Table1} Coefficients $A_{n,k}$ of $Q_n(x)$.}
\end{table}

\subsection{Asymptotic normality by singularity analysis}
Our first result, using techniques from singularity analysis 
(see Canfield \cite{Ca15}, Section 3.6), states
that the double sequence $A_{n,k}$ is asymptotically
normal with asymptotic mean $a_n=
\frac{1}{\sqrt{5}} \,\, n $ and asymptotic variance
$ b_n^2 =  \frac{2}{5} \, \frac{1}{\sqrt{5}}\,\, n$.
\begin{theorem}\label{th1}
There exist real sequences $(a_n)_n$ and $(b_n)_n$ with $b_n >0$, such that
\begin{equation}\label{singularity}
\lim_{n \to \infty}  \func{sup}_{x \in \mathbb{R}} \left|
{ \frac{1}{\sum_{k=0}^n A_{n,k}}
\sum_{ k \leq a_n  + x b_n} A_{n,k} - \frac{1}{\sqrt{2 \pi}} \int_{-\infty}^x
e^{-\frac{t^2}{2}} \, dt} \right|=0.
\end{equation}
The sequences given by $a_n:= n \, a$ and $b_n^2:= n \, b^2$ satisfy  (\ref{singularity})
with
\begin{equation*}
a:= \frac{1}{\sqrt{5}} \text{ and } b^2:= \frac{2}{5} \, \frac{1}{\sqrt{5}}.
\end{equation*}
\end{theorem}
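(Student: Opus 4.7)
The plan is to apply the singularity-analysis / quasi-power framework of Bender and Canfield to the bivariate generating function. With $g(n)=n$ the definition \eqref{Q} gives $G(t)=t/(1-t)^{2}$, so the first step is to identify
$$F(x,t) \;:=\; \sum_{n\ge 0} Q_n(x)\,t^n \;=\; \frac{(1-t)^2}{(1-t)^2 - xt} \;=\; \frac{(1-t)^2}{t^2-(2+x)t+1},$$
which is consistent with the three-term recurrence \eqref{3term}. The denominator factors as $(t-\rho(x))(t-\rho^{\ast}(x))$ with
$$\rho(x) \;=\; \tfrac12\bigl((2+x) - \sqrt{x^2+4x}\bigr), \qquad \rho^{\ast}(x) \;=\; 1/\rho(x),$$
and I would verify that for $x$ in a complex neighborhood $U$ of $1$, $\rho(x)$ is analytic, distinct from $\rho^{\ast}(x)$, and the unique pole of $F(x,\cdot)$ of smallest modulus. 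Note that $\rho(1)=(3-\sqrt5)/2=\varphi^{-2}$, which is where the golden ratio (and hence Fibonacci numbers) enters.

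Second, since the dominant singularity is a simple pole depending analytically on the parameter $x$, I invoke singularity analysis (Canfield \cite{Ca15}, Section~3.6) to obtain a quasi-power expansion
$$\frac{Q_n(x)}{Q_n(1)} \;=\; \frac{C(x)}{C(1)}\,\Bigl(\frac{\rho(1)}{\rho(x)}\Bigr)^{\!n}\bigl(1+o(1)\bigr), \qquad x\in U,$$
with $C$ analytic and nonvanishing on $U$ and the error uniform in $x$. Applied to the random variable $X_n$ with $\Pr[X_n=k]=A_{n,k}/Q_n(1)$, this is exactly the hypothesis of the Bender--Canfield quasi-power central limit theorem: it yields the Gaussian limit with mean $a_n=n\,\psi'(0)$ and variance $b_n^2=n\,\psi''(0)$, where $\psi(u):=\log\rho(1)-\log\rho(e^u)$. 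The Kolmogorov-uniform formulation \eqref{singularity} then follows automatically from P\'olya's theorem because the limiting normal distribution function is continuous.

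Third, I pin down the constants by computing
$$\psi'(0)\;=\;-\frac{\rho'(1)}{\rho(1)}, \qquad \psi''(0)\;=\; -\frac{\rho''(1)}{\rho(1)} + \Bigl(\frac{\rho'(1)}{\rho(1)}\Bigr)^{\!2} -\frac{\rho'(1)}{\rho(1)}.$$
Routine differentiation of $\rho$ gives $\rho'(1)=(\sqrt5-3)/(2\sqrt5)$ and $\rho''(1)=2/(5\sqrt5)$, and substitution yields $a=\psi'(0)=1/\sqrt5$ and $b^{2}=\psi''(0)=\tfrac{2}{5}\cdot\tfrac{1}{\sqrt5}$, matching the theorem.

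The step I expect to require the most care is the uniformity claim in the quasi-power expansion: one must ensure that $\rho(x)$ remains isolated and simple on a full complex disc around $1$ (not only on the real line), and that $\rho^{\ast}(x)$ stays strictly larger in modulus there. This is clean here because the discriminant $x^2+4x=x(x+4)$ is nonzero at $x=1$, so both roots are analytic and well-separated on a small disc around $1$, and the residue factor $(1-\rho(x))^{2}/(\rho^{\ast}(x)-\rho(x))$ is analytic and nonzero; with this in hand, the strict positivity $b^{2}>0$ computed above provides the non-degeneracy needed to conclude the Gaussian limit.
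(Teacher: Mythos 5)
Your proposal is correct and follows essentially the same route as the paper: both identify $f(x,t)=(1-t)^{2}/\bigl(t^{2}-(2+x)t+1\bigr)$, track the dominant simple pole $\rho(x)=1+\tfrac{x}{2}-\sqrt{\tfrac{x^{2}}{4}+x}$ as it moves analytically with the parameter, and apply the Bender--Canfield singularity-analysis CLT, the paper doing so by verifying Bender's hypotheses for $r(s)=\rho(e^{s})$ via an explicit partial-fraction decomposition and bounded third derivative. Your constants agree with the paper's after the chain rule ($r'(0)=\rho'(1)$, $r''(0)=\rho''(1)+\rho'(1)$), and your computed values $\rho'(1)=\tfrac{\sqrt{5}-3}{2\sqrt{5}}$, $\rho''(1)=\tfrac{2}{5\sqrt{5}}$ indeed yield $a=\tfrac{1}{\sqrt{5}}$ and $b^{2}=\tfrac{2}{5\sqrt{5}}$.
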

Obviously, the sequences are not unique.
In singularity analysis they are frequently 
called mean and variance (see Bender \cite{Be73}) to
indicate the link to probability theory and 
the classical Central Limit Theorem (cf.\ \cite{Fi11}, section 1.1).

In this paper, we provide sequences $(\mu_n)_n$ and
$\left( \sigma _{n}^{2}\right) _{n}$, the expected value and variance
of a suitable sequence of random variable $X_n$. We obtain a refined version of Theorem \ref{th1}.
We apply a method of Harper \cite{Ha67} (see also \cite{Ca15} section 3.4: the method of negative roots).
The values $\mu_n$ and $\sigma_n^2$ involve Fibonacci numbers $F_n$ and the golden ratio $\varphi$.
They characterize the peaks and modes of the coefficients of the
generating series. These are real-rooted polynomials, which 
are related to the Jonqui\`{e}re function (also
called polylogarithm $\func{Li}_{-1}(t)$) given by
$
{G}(t):= \sum_{n=1}^{\infty} n \, t^n$. The radius of convergence is $R=1$.

\subsection{Probabilistic approach}
Let $X$ be a random variable. Then $X$ is called normally distributed, if the probability
$P( X \leq x)$ is equal to the normal distribution
\begin{equation*}
\Phi(x):=\frac{1}{ \sqrt{2 \pi} } \int_{- \infty}^x e^{-\frac{t^2}{2}} \, dt,
\end{equation*}
for all real $x$. The distribution $\Phi$ is called normal or 
Gauss distribution. Let further, $\phi(x)$ be the underlying density function.
\begin{definition}
A sequence $(S_n)_n$ of random variables satisfies a Central Limit Theorem, if 
there exist sequences $(a_n)_n$ and $(b_n)_n$ with $a_n, b_n \in \mathbb{R}$ and $b_n >0$, such that
the normalized random variables
\begin{equation*}
S_n^{*} := \frac{S_n - a_n}{b_n},
\end{equation*}
converge in distribution against the normal distribution 
\begin{equation*}
P(S_n^{*} \leq x) \stackrel{
{D}}{\longrightarrow} 
\Phi(x).
\end{equation*}
\end{definition}
Consider the sequence of random variables $X_n \in \{0,1, \ldots, n\}$
defined by: 
\begin{equation}\label{X}
P(X_n=k):= \frac{A_{n,k}}{\sum_{m=0}^n A_{n,m}}, \text{ where }Q_n(x) = \sum_{k=0}^n A_{n,k} \, x^k.
\end{equation}
Let $S_n:= \sum_{m=1}^n X_m$. Then 
\begin{equation} \label{S}
P(S_n^{*} \leq x) = \frac{1}{ \sum_{m=0}^n A_{n,m}}               
\sum_{k \leq a_n + x \, b_n} A_{n,k},
\end{equation}
leads to a probabilistic interpretation of Theorem \ref{th1}.
\begin{theorem}\label{CLT} 
Let $S_n^{*}$ be the normalized random variables as defined in (\ref{S})
with $a_n$ and $b_n^2$ provided by the expected value $\mathbb{E}(X_n)$ and variance
$\mathbb{V}(X_n)$ of the random variable $X_n$,
associated with the double sequence $A_{n,k}$.
Then $P(S_n^{*} \leq x) \stackrel{
{D}}{\longrightarrow} 
\Phi(x)$. Therefore,
\begin{equation}\label{CLTformula}
\lim_{n \to \infty} {\norm{
\frac{1}{\sum_{m=0}^n A_{n,m}} \sum_{k \leq \mathbb{E}(S_n)
 + \sqrt{\mathbb{V}(S_n)} \,  x} A_{n,k} - 
\Phi(x) }}_{
\mathbb{R}} =0.
\end{equation}
\end{theorem}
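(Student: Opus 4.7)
The plan is to realize $S_n$ as a sum of \emph{independent Bernoulli random variables} and then invoke the Lindeberg--Feller central limit theorem. The structural input is that every $Q_n(x)$ has only real, non-positive roots, which I would prove by induction from the three-term recurrence (\ref{3term}) via interlacing: if $Q_n$ strictly interlaces $Q_{n+1}$ by the inductive hypothesis, then at two consecutive roots $\alpha < \beta$ of $Q_{n+1}$ we have $Q_{n+2}(\alpha) = -Q_n(\alpha)$ and $Q_{n+2}(\beta) = -Q_n(\beta)$ with opposite signs, so $Q_{n+2}$ acquires a root strictly between $\alpha$ and $\beta$; together with the positive leading coefficient and the root at $x=0$ inherited from $Q_{n+1}(0)=0$, this yields the full $n+1$ real non-positive roots. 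The base cases $Q_1(x)=x$ and $Q_2(x) = x(x+2)$ start the induction.

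With real-rootedness in hand, Harper's factorization (\cite{Ha67}; see also \cite{Ca15}) yields
\begin{equation*}
\frac{Q_n(x)}{Q_n(1)} = \prod_{i=1}^{n} \bigl( p_{n,i}\, x + (1 - p_{n,i}) \bigr),
\end{equation*}
for suitable $p_{n,i} \in (0,1]$. This is precisely the probability generating function of a sum $\sum_{i=1}^n Y_{n,i}$ of independent Bernoulli$(p_{n,i})$ variables, so $X_n$ is equal in law to such a sum. Taking the $X_m$ independent across $m$ (as the definition of $S_n$ presumes), $S_n$ is realized as a sum of $\binom{n+1}{2}$ independent Bernoulli summands, each bounded by $1$.

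To apply the Lindeberg--Feller theorem, one only needs $\mathbb{V}(S_n) \to \infty$, since boundedness of the summands makes the Lindeberg truncation condition automatic: for $n$ with $\varepsilon \sqrt{\mathbb{V}(S_n)} > 1$ every truncation indicator vanishes. Specializing (\ref{3term}) at $x=1$ gives $Q_{n+2}(1) = 3\, Q_{n+1}(1) - Q_n(1)$, whose characteristic roots $\varphi^{\pm 2}$ recover $Q_n(1) = F_{2n}$. Differentiating (\ref{3term}) once and twice at $x=1$ gives inhomogeneous versions of the same recursion for $Q_n'(1)$ and $Q_n''(1)$, solvable in closed form in terms of $F_{2n}$; the identities $\mathbb{E}(X_n) = Q_n'(1)/Q_n(1)$ and $\mathbb{V}(X_n) = Q_n''(1)/Q_n(1) + \mathbb{E}(X_n) - \mathbb{E}(X_n)^2$ then show $\mathbb{V}(X_n)$ grows linearly in $n$, so $\mathbb{V}(S_n) = \sum_{m=1}^n \mathbb{V}(X_m) \to \infty$. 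The Lindeberg--Feller theorem produces $S_n^{\ast} \stackrel{\mathcal{D}}{\longrightarrow} \Phi$, and P\'olya's uniform convergence theorem upgrades convergence in distribution of $S_n^{\ast}$ to uniform convergence of CDFs, which is exactly (\ref{CLTformula}).

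The main obstacle, in my view, is the careful execution of the interlacing induction: Morgan--Voyce real-rootedness is classical, but the sign bookkeeping at the endpoints of the intervals between consecutive roots, together with the common root at the origin, must be handled with some care. Everything downstream is a routine application of standard CLT machinery to an especially clean bounded sum of independent variables.
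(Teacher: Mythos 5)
Your core machinery (real-rootedness via interlacing, Harper's Bernoulli factorization, Lindeberg--Feller with uniformly bounded summands, P\'olya's theorem) is sound, but you apply it to the wrong random variable. Whatever the introduction's notation suggests, equation (\ref{S}) --- which is what defines $P(S_n^{*}\leq x)$ in the theorem --- forces $P(S_n=k)=A_{n,k}/\sum_m A_{n,m}$, i.e.\ $S_n$ has the same law as $X_n$ itself; the paper's Harper section makes this explicit by setting $S_n=\sum_{j=1}^n X_{n,j}$, the sum of the $n$ Bernoulli variables attached to the roots of the \emph{single} polynomial $Q_n$, so that $\mathbb{E}(S_n)=\mu_n=\mathbb{E}(X_n)$ and $\mathbb{V}(S_n)=\sigma_n^2=\mathbb{V}(X_n)$, exactly as the theorem's hypothesis states. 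Under your reading ($S_n=X_1+\cdots+X_n$ with independent rows), $P(S_n^{*}\leq x)$ is \emph{not} the left-hand side of (\ref{CLTformula}): the law of the convolution is not $A_{n,k}/F_{2n}$. Worse, with your normalizers $\mathbb{E}(S_n)\sim \tfrac{1}{2\sqrt{5}}\,n^2$ and $\sqrt{\mathbb{V}(S_n)}=O(n)$, the threshold $\mathbb{E}(S_n)+x\sqrt{\mathbb{V}(S_n)}$ exceeds $n$ for every fixed $x$ once $n$ is large, so the quantity in (\ref{CLTformula}) tends to $1$ rather than $\Phi(x)$, and the sup-norm does not go to $0$. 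Hence the final step, identifying your Lindeberg--Feller limit with (\ref{CLTformula}), is a genuine gap: you have proved a central limit theorem for a different sum.

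The repair is small and keeps all of your ingredients: run the Lindeberg--Feller argument on the row-$n$ triangular array $Y_{n,1},\dots,Y_{n,n}$ itself (no independence across rows is needed or used). The summands are bounded by $1$, so the Lindeberg condition reduces to $\sigma_n^2=\mathbb{V}(X_n)\to\infty$, which follows from the linear growth in Proposition \ref{prop1} and Corollary \ref{limits}; P\'olya then gives the uniform convergence in (\ref{CLTformula}) with the correct normalizers $\mu_n$, $\sigma_n$. This corrected route is a legitimate, slightly more qualitative alternative to the paper's proof, which applies the Berry--Esseen inequality (Theorems \ref{Berry:abstract} and \ref{BEapplication}) to the same Harper array and thereby gets the uniformity together with an explicit rate $C\sigma_n^{-1}$, so that Theorem \ref{CLT} follows immediately from $\sigma_n\to\infty$. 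Your interlacing induction is an acceptable substitute for the paper's citation of $Q_n(x)=x\,U_{n-1}(x/2+1)$ from \cite{HNT20}, provided you factor out the common root at $0$ before interlacing on the negative axis.
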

Here ${\norm{f(x)}}_M$ denotes the supremum norm of $f$ on $M\subset \mathbb{R}$.
This theorem is obtained by a method introduced by Harper \cite{Ha67, Ca15} and
a result 
by P\'olya for continuous distribution functions.
We determine the explicit values of the mean and variance of $X_n$.
These are expressed in terms of Fibonacci numbers $F_n$ and are 
related to the peaks and plateaus of the unimodal sequence
\begin{equation*}
A_{n,0} \leq A_{n,1} \leq \ldots \leq A_{n,m} \geq \ldots \geq  A_{n,n}
\end{equation*}
by Darroch's Theorem \cite{Da64}.
The sequence is log-concave and therefore, unimodal, since the 
polynomials $Q_n(x)$ are orthogonal polynomials.
Indeed, $Q_n(x)= x \, U_{n-1}(x/2+1)$, where $U_n(x)$ are the Chebyshev polynomials
of the second kind 
\cite{HNT20}.
We first determine the normalizing factor, the average over all coefficients.
\begin{lemma} \label{lem1}
The normalizing factor is related to Fibonacci numbers. 
We have 
\begin{equation*}
\sum_{k=0}^n A_{n,k} = 
Q_n(1) = F_{2n}.
\end{equation*}
\end{lemma}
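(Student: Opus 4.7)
The first equality $\sum_{k=0}^n A_{n,k} = Q_n(1)$ is immediate from the definition of $A_{n,k}$ as the coefficients of $Q_n(x)$, so the substance of the lemma is the identity $Q_n(1) = F_{2n}$. The plan is to verify this by induction using the three-term recurrence (\ref{3term}), rather than by manipulating the binomial identity $A_{n,k} = \binom{n+k-1}{2k-1}$ (which would give an alternative route via the classical identity $F_{2n} = \sum_{k=1}^n \binom{n+k-1}{2k-1}$).

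First I would specialize (\ref{3term}) at $x=1$ to obtain the linear recursion
\begin{equation*}
Q_{n+2}(1) = 3\, Q_{n+1}(1) - Q_n(1), \qquad n \geq 0,
\end{equation*}
with initial values $Q_1(1)=1$ and $Q_2(1)=3$. Next I would show that the sequence $n \mapsto F_{2n}$ (for $n \geq 1$) satisfies the same recursion. Applying the Fibonacci recurrence $F_{m+2} = F_{m+1} + F_m$ twice yields
\begin{equation*}
F_{2n+4} = F_{2n+3} + F_{2n+2} = 2F_{2n+2} + F_{2n+1} = 3F_{2n+2} - F_{2n},
\end{equation*}
so $F_{2n}$ obeys the linear recursion $u_{n+2} = 3u_{n+1} - u_n$. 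The initial values $F_2 = 1 = Q_1(1)$ and $F_4 = 3 = Q_2(1)$ match, so a straightforward induction gives $Q_n(1) = F_{2n}$ for all $n \geq 1$.

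There is no serious obstacle here: everything reduces to checking that two second-order linear recurrences with identical characteristic polynomial $X^2 - 3X + 1$ agree on two consecutive initial values. The only mildly subtle point is the identity $F_{2n+4} - 3F_{2n+2} + F_{2n} = 0$, which is a standard consequence of the Fibonacci recursion (equivalently, of the fact that the golden ratio $\varphi$ and its conjugate $\bar\varphi = -1/\varphi$ satisfy $\varphi^2 + \bar\varphi^2 = 3$ and $\varphi^2 \bar\varphi^2 = 1$, so the characteristic roots of $u_{n+2} = 3u_{n+1} - u_n$ are exactly $\varphi^2$ and $\bar\varphi^2$, compatible with Binet's formula for $F_{2n}$).
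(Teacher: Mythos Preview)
Your proposal is correct and essentially identical to the paper's own proof: both specialize the three-term recurrence (\ref{3term}) at $x=1$ to obtain $u_{n}-3u_{n-1}+u_{n-2}=0$, verify from the Fibonacci relation that $F_{2n}$ satisfies the same second-order recurrence, and match the initial values $Q_1(1)=1=F_2$, $Q_2(1)=3=F_4$.
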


\begin{proposition} \label{prop1}
The random variable $X_n$ defined in (\ref{X}) 
has the expected value $\mu_n:= \mathbb{E}(X_n)$ and variance $\sigma_n^2:= \mathbb{V}(X_n)$
given by
\begin{eqnarray*}
\mu_n & = & \frac{2}{5}
\left( \frac{F_{2n+1}}{F_{2n}} - \frac{1}{2} + \frac{1}{n}\right) 
 \, n, \\
\sigma_n^2 & = &
\frac{4}{25} \, \left( \frac{F_{2n+1}}{F_{2n}} -  \frac{1}{2}- \frac{n}{F_{2n}^2} - \frac{1}{2n}\right) \, n.
\end{eqnarray*}
\end{proposition}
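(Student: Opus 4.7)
The plan is to reduce the computation of $\mu_n$ and $\sigma_n^2$ to evaluating $Q_n'(1)$ and $Q_n''(1)$, to determine these by differentiating the three-term recurrence (\ref{3term}) and solving the resulting second-order linear recurrences, and then to simplify with Binet-type identities for Fibonacci and Lucas numbers. Concretely, from the definition of $X_n$ in (\ref{X}) one has $\mu_n = Q_n'(1)/Q_n(1)$ and $\sigma_n^2 = (Q_n''(1) + Q_n'(1))/Q_n(1) - \mu_n^2$, where $Q_n(1) = F_{2n}$ by Lemma \ref{lem1}.

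Setting $q_n := Q_n'(1)$ and $r_n := Q_n''(1)$, differentiating (\ref{3term}) once and twice and evaluating at $x=1$ yields
\begin{align*}
q_{n+2} - 3\, q_{n+1} + q_n &= F_{2n+2}, \\
r_{n+2} - 3\, r_{n+1} + r_n &= 2\, q_{n+1},
\end{align*}
with initial data obtained directly from $Q_1$ and $Q_2$. The characteristic polynomial $z^2 - 3z + 1$ has roots $\varphi^{\pm 2}$, precisely the ones appearing in the Binet expression for $F_{2n}$, so the forcing $F_{2n+2}$ is resonant. The identity $\varphi^2(2\varphi^2-3) = \sqrt{5}\,\varphi^2$ and its conjugate identify $n L_{2n}/5$ as a particular solution of the first recurrence; fixing the two free constants from $q_1$ and $q_2$ against the homogeneous basis $L_{2n}, F_{2n}$ yields $q_n = (n L_{2n} + 2 F_{2n})/5$. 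Applying $L_{2n} = 2 F_{2n+1} - F_{2n}$ and dividing by $F_{2n}$ produces the stated formula for $\mu_n$.

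The second recurrence is handled the same way, but its forcing $2 q_{n+1}$ now contains both an $n L_{2n+2}$ term and an $F_{2n+2}$ term, each resonant, so one needs a particular solution carrying an $n^2$-level correction, namely $n^2 F_{2n}/5 + n L_{2n}/25$. Matching $r_1, r_2$ produces $r_n = (5 n^2 F_{2n} + n L_{2n} - 8 F_{2n})/25$. Substituting $q_n$ and $r_n$ into $\sigma_n^2 = (r_n + q_n)/F_{2n} - \mu_n^2$ leaves a cross term proportional to $F_{2n+1}^2/F_{2n}^2$; this is the one step that is not purely formal, and it is dispatched by Cassini's identity $F_{2n+1}^2 = F_{2n} F_{2n+2} + 1$ together with $F_{2n+2} = F_{2n+1} + F_{2n}$, collapsing the expression into the shape advertised in the statement.

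The main obstacle is precisely that last step, together with locating the correct $n^2$-ansatz for the doubly resonant forcing of $r_n$: without applying Cassini at the right moment to eliminate $F_{2n+1}^2/F_{2n}^2$ rather than expand it out, the expression does not visibly match the closed form in the proposition, and the sign bookkeeping of the $1/n$ and $n/F_{2n}^2$ correction terms requires care.
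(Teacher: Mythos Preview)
Your proposal is correct and follows essentially the same approach as the paper: differentiate the three-term recurrence (\ref{3term}), solve the resulting inhomogeneous linear recurrences for $Q_n'(1)$ and $Q_n''(1)$ by the method of undetermined coefficients against the resonant forcing, and simplify the variance using a Cassini-type identity. The only cosmetic differences are that the paper works in the basis $\{F_{2n},F_{2n+1}\}$ rather than $\{F_{2n},L_{2n}\}$ (your formulas $q_n=(nL_{2n}+2F_{2n})/5$ and $r_n=(5n^2F_{2n}+nL_{2n}-8F_{2n})/25$ match the paper's after substituting $L_{2n}=2F_{2n+1}-F_{2n}$), and the paper states the key identity in the equivalent form $F_{2n}^2+F_{2n}F_{2n+1}-F_{2n+1}^2=-1$ rather than as Cassini.
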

\begin{remarks} \ \\
a) Note that $\mu _{n}$ is never an integer for
$n\geq 2$. If $\mu _{n}$ is an integer then
$2\frac{F_{2n+1}}{F_{2n}}n$ is also an integer. Since
$F_{2n+1}$ and $F_{2n}$ are coprime, 
$\frac{2n}{F_{2n}}$ is also an integer. Since
$F_{2n}>2n$ for $n\geq 3$ and  the only remaining case
is $n=1$. 
\\
b) The reciprocal polynomial of $Q_n(x)$ has the coefficients $\binom{N - k-1}{k}$ with $N=2n$.
These sequences had been investigated by Tanny and Zucker \cite{TZ74, TZ78} and Benoumhani \cite{Be03}.
It had been shown that the sequence is strictly log-concave, the smallest mode had been determined and
the indices, at which a double maximum occurs.
\end{remarks} 
Darroch's theorem \cite{Da64} (see also Benoumhani \cite{Be96}) leads to:
\begin{corollary} 
The modes of $Q_n(x)$ are located around $\mu_n$. Let $m_n$ be a mode, then
\begin{equation*}
0 <
\left| 
\frac{2}{5}
\left( \frac{F_{2n+1}}{F_{2n}} - \frac{1}{2} + \frac{1}{n}  \right) 
 \, n - m_n 
\right| < 1. 
\end{equation*}
\end{corollary}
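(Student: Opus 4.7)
The plan is to deduce this corollary as a direct application of Darroch's theorem, using Proposition \ref{prop1} for the value of $\mu_n$ and Remark a) for the strict lower bound.

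First I would verify the hypotheses of Darroch's theorem. Recall that Darroch's theorem asserts that if $P(x)=\sum_{k=0}^n a_k x^k$ has only real non-positive roots and non-negative coefficients, and $\mu:=P'(1)/P(1)$ is the mean of the associated probability distribution $p_k=a_k/P(1)$, then every mode $m$ of $(a_k)$ satisfies $\lvert \mu-m\rvert<1$. In our situation, $Q_n(x)=x\,U_{n-1}(x/2+1)$ (as recalled in the excerpt), and the Chebyshev roots $\cos(k\pi/n)$, $k=1,\dots,n-1$, translate into roots $2\cos(k\pi/n)-2\in[-4,0)$ of $Q_n$, together with the simple root at $0$. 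Thus $Q_n$ has only non-positive real roots and non-negative coefficients $A_{n,k}$, so Darroch's theorem applies to the probability distribution $P(X_n=k)=A_{n,k}/Q_n(1)$ introduced in (\ref{X}).

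Next I would identify $\mu$ with $\mu_n$ from Proposition \ref{prop1}. Indeed $\mu=\sum_{k}k\,A_{n,k}/\sum_{k}A_{n,k}=\mathbb{E}(X_n)=\mu_n$. Darroch's theorem then gives
\begin{equation*}
\left\lvert \frac{2}{5}\left(\frac{F_{2n+1}}{F_{2n}}-\frac{1}{2}+\frac{1}{n}\right)n - m_n\right\rvert = \lvert \mu_n - m_n\rvert < 1,
\end{equation*}
which is the upper bound in the claim.

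Finally, for the strict positivity $0<\lvert \mu_n-m_n\rvert$, I would invoke Remark a), which shows that $\mu_n$ is not an integer for $n\geq 2$. Since every mode $m_n$ is by definition a nonnegative integer index $k\in\{0,1,\dots,n\}$, the difference $\mu_n-m_n$ cannot vanish, so the bound is strict. The only thing requiring comment is the excluded case $n=1$, where $Q_1(x)=x$ has the unique mode $m_1=1=\mu_1$; the corollary should therefore be read under the implicit assumption $n\geq 2$, consistent with Remark a).

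The main (mild) obstacle is pinning down the precise form of Darroch's theorem that is being cited: one must confirm it is the sharper ``$<1$'' statement (not just ``$\leq 1$'') under real-rootedness with non-positive roots. Once the real-rootedness of $Q_n$ via the Chebyshev factorization is in hand, the rest of the proof is essentially bookkeeping: apply Darroch for the upper bound and appeal to Remark a) for strictness on the lower bound.
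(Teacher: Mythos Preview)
Your proposal is correct and follows exactly the route the paper indicates: the corollary is stated as an immediate consequence of Darroch's theorem (together with Proposition~\ref{prop1} for the explicit form of $\mu_n$), and you have correctly supplied the missing details, namely the real-rootedness via the Chebyshev factorization and the use of Remark~a) to justify the strict lower bound for $n\geq 2$.
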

Using elementary calculations and explicit solutions of 
the Pell--Fermat equation leads to:
\begin{theorem}\label{exact}
Let $n \in \mathbb{N}$.
The polynomials $Q_n(x)$ are unimodal and have at most two modes.
The smallest mode $m
$ is uniquely determined by
$$\frac{\sqrt{5n^{2}+1}- 1}{5} \leq m
< \frac{\sqrt{5n^{2}+1} +4}{5}.$$
The mode
is unique if $5m^{2}+2m\neq n^{2}$. We have two modes $m_{k}
$ and $m_{k}
+1$, iff
\begin{equation*}
\left( 
\begin{array}{c}
5m_k +1 \\ n_k
\end{array}\right)
= \left(
\begin{array}{cc}
161 & 360 \\
72 & 161
\end{array}
\right)^{k}\left(
\begin{array}{c}
1 \\
0
\end{array}\right)
\qquad \text{ for } k \geq 1.
\end{equation*}
\end{theorem}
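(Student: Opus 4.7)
The plan is to reduce the problem to an elementary analysis of the ratio $A_{n,k+1}/A_{n,k}$ of consecutive coefficients, and then to convert the double-mode condition into a Pell equation that is solved classically.

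First I would compute the ratio. Starting from $A_{n,k}=\binom{n+k-1}{2k-1}$, a brief manipulation gives
$$\frac{A_{n,k+1}}{A_{n,k}}=\frac{(n+k)(n-k)}{2k(2k+1)},$$
and this is strictly decreasing in $k$ on $\{1,\dots,n-1\}$ (the numerator decreases and the denominator increases, both remaining positive). A strictly decreasing ratio can equal $1$ at most once, which forces unimodality and yields the at-most-two-modes assertion. Moreover, $A_{n,k+1}\ge A_{n,k}$ iff $n^{2}\ge 5k^{2}+2k$, so the smallest mode $m$ is the least positive integer $k$ satisfying $5k^{2}+2k\ge n^{2}$. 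Solving the quadratic $5k^{2}+2k-n^{2}=0$ gives $m\ge\frac{\sqrt{5n^{2}+1}-1}{5}$, and since $m$ is the ceiling of this value (differing from it by less than one) the upper bound $m<\frac{\sqrt{5n^{2}+1}+4}{5}$ follows immediately.

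Next, a double mode corresponds exactly to the equality $A_{n,m+1}=A_{n,m}$, i.e.\ $5m^{2}+2m=n^{2}$. The substitution $X:=5m+1$, $Y:=n$ (equivalently, multiplying by $5$ and completing the square on the left) recasts this as the Pell equation
$$X^{2}-5Y^{2}=1,\qquad X\equiv 1\pmod{5}.$$
The fundamental solution of $X^{2}-5Y^{2}=1$ is $(9,4)$, and every positive solution has the form $X_{k}+Y_{k}\sqrt{5}=(9+4\sqrt{5})^{k}$. A one-line congruence argument shows $X_{k}\equiv (-1)^{k}\pmod{5}$, so the side condition $X\equiv 1\pmod{5}$ picks out precisely the even powers, equivalently the powers of $(9+4\sqrt{5})^{2}=161+72\sqrt{5}$. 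Multiplication by $161+72\sqrt{5}$ acts on the pair $(X,Y)$ via the matrix $\sm{161}{360}{72}{161}$, so iterating this matrix on the trivial solution $(1,0)^{T}$ generates exactly the family of double-mode solutions $(5m_{k}+1,n_{k})$ stated in the theorem.

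The main obstacle is the residue-class bookkeeping in the last step: one must verify that no solution of the Pell equation with $X\equiv 1\pmod 5$ is missed (that is, these really are exactly the even powers of the fundamental unit) and that the matrix representation above faithfully encodes multiplication by $161+72\sqrt{5}$. Once this is in place, combining the quadratic bounds from the first step with the Pell enumeration from the second yields the theorem in full, including the uniqueness criterion $5m^{2}+2m\neq n^{2}$ coming directly from the ratio computation.
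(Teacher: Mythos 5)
Your proposal is correct and follows essentially the same route as the paper: analyze the sign of $A_{n,m+1}-A_{n,m}$ (equivalently the ratio $\frac{(n+m)(n-m)}{2m(2m+1)}$), which reduces the mode location to the inequality $5m^{2}+2m\gtrless n^{2}$ and the double-mode condition to the Pell--Fermat equation $(5m+1)^{2}-5n^{2}=1$ with $5m+1\equiv 1\pmod 5$, picked out by the even powers of the fundamental solution $(9,4)$, i.e.\ powers of $\sm{161}{360}{72}{161}$. The only cosmetic difference is that you argue via the strictly decreasing ratio and the congruence $X_{k}\equiv(-1)^{k}\pmod 5$, while the paper works with the difference and the matrix $\sm{9}{20}{4}{9}$ directly; the substance is identical.
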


\begin{remark}
If $5m^{2}+2m=n^{2}$ has
positive integer solutions, there
are two modes of
$
A_{n,m}
$ at $m
$ and $m
+1$.
For example, for $n=n_1=72$ and $m=m_1=32$, we have
$\binom{103}{63}=61218182743304701891431482520=\binom{104}{65}$. The sequence of
these $n_k$ starts with
\[
72,23184,7465176,2403763488,774004377960,249227005939632,\ldots
\]
and the corresponding $m_k$ are
\[
32,10368,3338528,1074995712,346145280800,111457705421952,\ldots
\]
\end{remark}

The expected values $\mu_n$ and variances $\sigma_n^2$ converge against
the sequences $a_n$ and $b_n$,
obtained in Theorem \ref{th1} by methods of singularity analysis.
\begin{corollary} \label{limits}
Let $X_n$ be the random variable defined by (\ref{X}) with expected value $\mu_n$ and
variance $\sigma_n^2$. Let $a$ and $b$ be as in Theorem \ref{th1}. Then
\begin{eqnarray*}
\lim_{n \to \infty} \frac{\mu_n}{n} & = &  \frac{2}{5} \left( \varphi - \frac{1}{2} \right) = \frac{1}{\sqrt{5}}=a, \\
\lim_{n \to \infty} \frac{\sigma_n^2}{n} & = & \frac{4}{25} \left( \varphi - \frac{1}{2} \right) =
\frac{2}{5} \, \frac{1}{\sqrt{5}} = b^2,        
\end{eqnarray*}
where $\varphi$ denotes the golden ratio.
\end{corollary}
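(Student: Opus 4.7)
The plan is simply to insert the explicit closed forms for $\mu_n$ and $\sigma_n^2$ supplied by Proposition \ref{prop1}, divide by $n$, and pass to the limit term by term. The only genuine analytic input required is the classical fact that consecutive Fibonacci ratios converge to the golden ratio, which I would derive from Binet's formula $F_n=(\varphi^n-\psi^n)/\sqrt{5}$, where $\psi=(1-\sqrt{5})/2$ satisfies $|\psi|<1$. Specializing at even index gives
\begin{equation*}
\lim_{n\to\infty} \frac{F_{2n+1}}{F_{2n}} \;=\; \varphi.
\end{equation*}

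For the mean, dividing the formula of Proposition \ref{prop1} by $n$ yields
\begin{equation*}
\frac{\mu_n}{n} \;=\; \frac{2}{5}\left(\frac{F_{2n+1}}{F_{2n}} - \frac{1}{2} + \frac{1}{n}\right),
\end{equation*}
and since $1/n\to 0$, the right-hand side tends to $\frac{2}{5}(\varphi-\frac{1}{2})$. For the variance, the same manipulation produces
\begin{equation*}
\frac{\sigma_n^2}{n} \;=\; \frac{4}{25}\left(\frac{F_{2n+1}}{F_{2n}} - \frac{1}{2} - \frac{n}{F_{2n}^2} - \frac{1}{2n}\right).
\end{equation*}
The extra term $n/F_{2n}^2$ vanishes in the limit because Binet's formula forces $F_{2n}\sim \varphi^{2n}/\sqrt{5}$, so $n/F_{2n}^2$ decays exponentially; the term $1/(2n)$ vanishes trivially. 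Hence the ratio converges to $\frac{4}{25}(\varphi-\frac{1}{2})$.

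It then remains to identify these limits with the constants $a$ and $b^2$ from Theorem \ref{th1}. Using $\varphi-\frac{1}{2}=\frac{\sqrt{5}}{2}$, one computes
\begin{equation*}
\frac{2}{5}\Bigl(\varphi-\tfrac{1}{2}\Bigr)=\frac{2}{5}\cdot\frac{\sqrt{5}}{2}=\frac{1}{\sqrt{5}}=a, \qquad \frac{4}{25}\Bigl(\varphi-\tfrac{1}{2}\Bigr)=\frac{2\sqrt{5}}{25}=\frac{2}{5}\cdot\frac{1}{\sqrt{5}}=b^2,
\end{equation*}
which closes the argument. There is really no main obstacle here; the corollary is a direct asymptotic reading of Proposition \ref{prop1}, and the only care needed is in controlling the lower-order term $n/F_{2n}^2$ by invoking the exponential growth of $F_{2n}$.
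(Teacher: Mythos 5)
Your proposal is correct and follows essentially the same route as the paper, which simply observes that the corollary follows from Proposition \ref{prop1} together with the limit $\lim_{n\to\infty} F_{2n+1}/F_{2n}=\varphi$; you merely fill in the routine details (Binet's formula, the exponential decay of $n/F_{2n}^2$, and the identity $\varphi-\tfrac{1}{2}=\tfrac{\sqrt{5}}{2}$), all of which check out.
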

This follows from Proposition \ref{prop1}, since we know that
\begin{equation*}
\lim_{n \to \infty} \frac{F_{2n+1}}{F_{2n}} = \varphi \qquad (\text{Kepler}).
\end{equation*}
It is possible to obtain a rate of convergence of (\ref{CLTformula})
by utilizing the Berry--Esseen theorem (see for example \cite{Ca15}, section 3.2),
which implies Theorem \ref{CLT}, since the variance satisfies $$\lim_{n \to \infty} \sigma_n 
= \infty .$$
\begin{theorem}\label{BEapplication}
Let $S_n^{*}$ be the normalized random variables as defined in (\ref{S}),
associated with $A_{n,k}$.
Then $$ \vert P(S_n^{*} \leq x) - \Phi(x) \vert \, \leq \, C \, \sigma_n^{-1}.$$ 
Here $C>0$ can be chosen as $C=0.7975$.
\end{theorem}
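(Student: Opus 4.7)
The strategy is to combine the real-rootedness of $Q_n(x)$ (Harper's method) with a quantitative version of the Central Limit Theorem. The key is that once $X_n$ is written as a sum of independent Bernoulli variables, the ratio of the third absolute central moment to the variance is automatically bounded, which is precisely the input needed for Berry--Esseen.

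First I would recall the identity $Q_n(x) = x\, U_{n-1}(x/2+1)$ noted earlier in the paper. Since $U_{n-1}$ has $n-1$ simple real roots in $(-1,1)$, it follows that $Q_n(x)$ has $n$ real non-positive roots $-r_1^{(n)}, \ldots, -r_n^{(n)}$ with $r_k^{(n)}\geq 0$. Using Lemma~\ref{lem1} to normalize,
\begin{equation*}
\frac{Q_n(x)}{F_{2n}} = \prod_{k=1}^{n} \frac{x + r_k^{(n)}}{1 + r_k^{(n)}}.
\end{equation*}
By definition (\ref{X}), the left-hand side is the probability generating function of $X_n$, and the right-hand side is the generating function of a sum $Y_1^{(n)} + \cdots + Y_n^{(n)}$ of independent Bernoulli random variables with parameters $p_k^{(n)} := 1/(1+r_k^{(n)})$. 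Hence $X_n \stackrel{d}{=} Y_1^{(n)} + \cdots + Y_n^{(n)}$.

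Next I would note the elementary inequality for a Bernoulli variable $Y$ with parameter $p \in [0,1]$:
\begin{equation*}
\mathbb{E}\bigl|Y-p\bigr|^{3} = p(1-p)\bigl(p^{2}+(1-p)^{2}\bigr) \leq p(1-p) = \mathbb{V}(Y).
\end{equation*}
Summing this over $k=1,\ldots,n$ gives $\rho_n := \sum_{k} \mathbb{E}|Y_k^{(n)} - p_k^{(n)}|^3 \leq \sum_k \mathbb{V}(Y_k^{(n)}) = \mathbb{V}(X_n) = \sigma_n^2$, using the independence of the $Y_k^{(n)}$ in the last equality.

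Finally I would apply the (non-identically distributed) Berry--Esseen theorem, in the version stated in \cite{Ca15}, Section 3.2, with the admissible constant $C = 0.7975$ (van Beek's bound). This yields
\begin{equation*}
\bigl|P(S_n^{*}\leq x) - \Phi(x)\bigr| \;\leq\; C\,\frac{\rho_n}{\sigma_n^{3}} \;\leq\; C\,\frac{\sigma_n^{2}}{\sigma_n^{3}} \;=\; \frac{C}{\sigma_n},
\end{equation*}
which is exactly the claim. Combined with Corollary~\ref{limits} (so that $\sigma_n \to \infty$), this also re-derives Theorem~\ref{CLT} as a byproduct. The only non-mechanical step is the real-rooted factorization in Step 1, but this is essentially handed to us by the Chebyshev identity already recorded in the excerpt; the remainder is bookkeeping of standard probabilistic inequalities.
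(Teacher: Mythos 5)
Your proposal is correct and follows essentially the same route as the paper: the real-rooted factorization of $Q_n(x)/F_{2n}$ into independent Bernoulli factors (Harper's method), the bound $\mathbb{E}\vert Y-p\vert^3 \leq \mathbb{V}(Y)$ summed over the factors (the paper's Lemma \ref{estimate}), and the non-i.i.d.\ Berry--Esseen theorem with van Beek's constant $C=0.7975$. The only cosmetic difference is that you verify the third-moment inequality in the parametrization $p_k=1/(1+r_k)$ while the paper states it in terms of the roots $r_k$; the argument is identical.
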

The Central Limit Theorem, to quote Bender \cite{Be73} and Canfield \cite{Ca75}, provides a certain
qualitative feel for the numbers $A_{n,k}$. Further information is provided by
local limit theorems (cf.\ \cite{Ca15}, section 3.7).
Let $A_{n,k}^{*}:= A_{n,k}/Q_n(1)$.
We consider subsets $K \subset \{0, 1, \ldots, n\}$  and $k \in K$ with the asymptotic behavior
\begin{equation*}
{A_{n,k}^{*}} \,\, \sim   \,\,   \left.\frac{\phi(x)}{\sigma_n}\right\vert_{x= (k-\mu_n)/\sigma_n}.
\end{equation*}
More generally, we say the doubly indexed sequence $a(n,k)$ satisfies a local limit theorem on a set $S$ of real numbers
provided
\begin{equation*}\label{canfield:local}
\func{sup}_{x \in S} \Big| \frac{ \sigma_n \, 
a(n,\floor{ \mu_n + x \, \sigma_n})}{\sum_{k} a(n,k)} - \phi(x) \Big| \longrightarrow 0,
\end{equation*}
where $\mu_n$ and $\sigma_n$ are the expected values and variances (\cite{Ca15}, definition 3.7.1).
Further, we quote the following result.
\begin{theorem}[Bender, \cite{Ca15}, Theorem 3.7.2] \label{theorem:local}
Suppose that $a(n,k)$ are asymptotically normal and $\sigma_n^2 \rightarrow \infty$. If for
each $n$ the sequence $a(n,k)$ is unimodal in $k$, then $a(n,k)$ satisfies a local limit theorem on the set
$\{ x \, \, : |x| \geq \varepsilon\}$, for any $\varepsilon >0$. If for each $n$ the sequence $a(n,k)$ is log-concave in $k$, the
$a(n,k)$ satisfies a local limit theorem on the set $\mathbb{R}$.
\end{theorem}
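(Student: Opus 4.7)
The plan is to derive this local limit statement from the CLT hypothesis by a bracketing argument that converts global information about the CDF $F_n^{*}(y):=\sum_{j\le y} a(n,j)/T_n$, with $T_n:=\sum_k a(n,k)$, into a pointwise asymptotic for $a(n,k)/T_n$. Set $\varepsilon_n:=\sup_{y\in\mathbb{R}}|F_n^{*}(\mu_n+y\sigma_n)-\Phi(y)|$; asymptotic normality gives $\varepsilon_n\to 0$, and by assumption $\sigma_n\to\infty$.

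For the unimodal case with $|x|\ge\varepsilon>0$, I would first reduce to $x\ge\varepsilon$ (the other side being symmetric). Choose a positive integer window length $h_n$ with $h_n/\sigma_n\to 0$ and $\sigma_n\varepsilon_n/h_n\to 0$; for instance $h_n:=\max\{1,\lceil\sigma_n\sqrt{\varepsilon_n}\rceil\}$ works as soon as $\sigma_n\to\infty$. For $n$ large the index $k:=\lfloor\mu_n+x\sigma_n\rfloor$ lies strictly to the right of the mode, so unimodality yields
\[
\frac{1}{h_n}\sum_{j=k+1}^{k+h_n}\frac{a(n,j)}{T_n}\;\le\;\frac{a(n,k)}{T_n}\;\le\;\frac{1}{h_n}\sum_{j=k-h_n+1}^{k}\frac{a(n,j)}{T_n}.
\]
Each bracketing sum is a difference of $F_n^{*}$ at two neighbouring scaled points, which by the CLT and a first-order Taylor expansion of $\Phi$ equals $\phi(x)\,h_n/\sigma_n+O(h_n^{2}/\sigma_n^{2})+O(\varepsilon_n)$. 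Multiplying by $\sigma_n$ and using the calibration of $h_n$ squeezes $\sigma_n\,a(n,k)/T_n$ to $\phi(x)$, uniformly on $\{x:|x|\ge\varepsilon\}$.

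For the log-concave case on all of $\mathbb{R}$ the preceding bracketing breaks down on a neighbourhood of the mode. Here I would invoke log-concavity, which forces the ratios $r_j:=a(n,j+1)/a(n,j)$ to be non-increasing in $j$. On any window $[k,k+h_n]$ this yields $r_{k+h_n-1}^{h_n}\le a(n,k+h_n)/a(n,k)\le r_k^{h_n}$. A supplementary lemma, combining log-concavity with the concentration of mass within $O(\sigma_n)$ of $\mu_n$ provided by the CLT, gives $r_j=1+O(1/\sigma_n)$ uniformly for $j$ in an $O(\sigma_n)$-neighbourhood of $\mu_n$. Consequently the extreme ratios degrade the sandwich of Step~1 by only a multiplicative factor $1+o(1)$, and the same CLT estimate of the window sum then delivers the local limit theorem at every $x\in\mathbb{R}$, the mode included.

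The main obstacle is the joint calibration of the window length $h_n$: Taylor expansion demands $h_n=o(\sigma_n)$, whereas the uniform CLT error demands $h_n\gg\sigma_n\varepsilon_n$, and since $\varepsilon_n\to 0$ only qualitatively one must define $h_n$ as a function of both $\sigma_n$ and $\varepsilon_n$. The choice $h_n=\max\{1,\lceil\sigma_n\sqrt{\varepsilon_n}\rceil\}$ drives both error contributions to zero whenever $\sigma_n\to\infty$. For the log-concave extension the further technical input is the estimate $r_j-1=O(1/\sigma_n)$ in a neighbourhood of the mode, which marries log-concavity with the Gaussian tail inherent in the CLT and is the most delicate step of the argument.
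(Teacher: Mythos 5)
You should first note that the paper does not prove this statement at all: it is quoted verbatim from the literature (Bender's 1973 theorem, cited as Theorem 3.7.2 of Canfield's handbook chapter), so your argument can only be measured against the standard proof, not against anything in this paper. Your unimodal half is essentially that standard proof: the bracketing of $a(n,k)/T_n$ between averages over adjacent windows of length $h_n$, with $h_n$ calibrated between $\sigma_n\varepsilon_n$ and $\sigma_n$, is correct, and your choice $h_n=\max\{1,\lceil\sigma_n\sqrt{\varepsilon_n}\rceil\}$ works. The one step you assert without justification is that for large $n$ the index $k=\lfloor\mu_n+x\sigma_n\rfloor$ (and in fact the whole window $[k-h_n+1,k]$) lies to the right of the mode, uniformly in $x\ge\varepsilon$. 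This needs a lemma that the mode is at $\mu_n+o(\sigma_n)$; it does follow from unimodality plus asymptotic normality (if the mode were $\ge\mu_n+c\sigma_n$ infinitely often, monotonicity on $[\mu_n,\mu_n+c\sigma_n]$ would force the mass of $[\mu_n,\mu_n+c\sigma_n/2]$ to be at most that of $[\mu_n+c\sigma_n/2,\mu_n+c\sigma_n]$, contradicting $\Phi(c/2)-\Phi(0)>\Phi(c)-\Phi(c/2)$), but it must be said.

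The log-concave half has a genuine gap. Your entire argument near the mode rests on the unproved ``supplementary lemma'' that $r_j=1+O(1/\sigma_n)$ uniformly for $|j-\mu_n|=O(\sigma_n)$, and the mechanism you indicate (log-concavity plus CLT concentration) does not deliver that rate. What it does deliver is the following: if $r_{j_0}=1-c$ with $j_0$ in the bulk, then all later ratios are $\le 1-c$, so the tail mass beyond $j_0$ is at most $(a(n,j_0)/T_n)/c\le(2\varepsilon_n+O(1/\sigma_n))/c$, while asymptotic normality keeps that tail mass bounded below by a positive constant; hence only $|r_j-1|=O(\varepsilon_n+1/\sigma_n)$, where $\varepsilon_n$ is the (unquantified) CLT error. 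With this bound your sandwich near the mode degrades by a factor $\exp(O(h_n(\varepsilon_n+1/\sigma_n)))$, so you need $h_n\varepsilon_n\to0$ in addition to $h_n\gg\sigma_n\varepsilon_n$, which forces $\sigma_n\varepsilon_n^2\to0$ --- a rate assumption not contained in the hypotheses, since $\varepsilon_n\to0$ may be arbitrarily slow compared with $\sigma_n\to\infty$ (e.g.\ $\varepsilon_n=1/\log\sigma_n$ makes the two requirements on $h_n$ incompatible). Sharpening the lemma to $O(1/\sigma_n)$ is not a routine ``marriage'' of log-concavity with Gaussian tails: the natural quantitative arguments from the hypotheses give at best $|r_j-1|=O\bigl((1+\log^{+}(\varepsilon_n\sigma_n))/\sigma_n\bigr)$, and proving anything like $O(1/\sigma_n)$ is essentially as hard as the local limit theorem you are trying to establish. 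So as written the case $x$ near $0$ (the whole point of the log-concavity hypothesis) is not proved; Bender's original argument handles the mode region by a different, genuinely log-concavity-specific comparison of neighbouring window sums rather than by a pointwise ratio estimate.
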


We have that $A(n,k)$ are asymptotically normal by Theorem \ref{CLT} and $\sigma_n^2 \rightarrow \infty$.
Further, that $Q_n(x)$ has real roots. This implies by Theorem \ref{theorem:local}:
\begin{corollary} \label{Anwendung}
\begin{equation*}
\lim_{n \to \infty} 
{\norm{\sigma_n \, 
{A_{n,
\floor{\mu_n + x \, \sigma_n}}^{*}} - \phi(x)
}}_{
\mathbb{R}} = 0.
\end{equation*}
\end{corollary}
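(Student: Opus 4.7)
The plan is to invoke Bender's local limit theorem (Theorem~\ref{theorem:local}) in its log-concave branch, because that branch delivers uniform convergence on all of $\mathbb{R}$, which is precisely the content of the corollary. All that is required is to verify the three hypotheses of Theorem~\ref{theorem:local} with $\mu_n$ and $\sigma_n$ taken from Proposition~\ref{prop1}.

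First, the asymptotic normality hypothesis is exactly Theorem~\ref{CLT} with the choice $a_n=\mu_n$, $b_n=\sigma_n$ furnished by Proposition~\ref{prop1}. Second, the growth condition $\sigma_n^2 \to \infty$ is immediate from Corollary~\ref{limits}, since $\sigma_n^2/n \to b^2 = \frac{2}{5\sqrt{5}}>0$. Third, to establish log-concavity of $k \mapsto A_{n,k}$ for each fixed $n$, I would use the identification $Q_n(x) = x\, U_{n-1}(x/2+1)$ with $U_{n-1}$ a Chebyshev polynomial of the second kind, which is recorded in the introduction. The roots of $U_{n-1}(y)$ are all real and lie in $(-1,1)$, so after the affine change $y = x/2+1$ the roots of $Q_n(x)$ are real and lie in $[-4,0]$. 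A polynomial with only real, non-positive roots has a log-concave coefficient sequence with no internal zeros, by Newton's inequalities; hence $(A_{n,k})_k$ is log-concave for every $n$.

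With all three hypotheses in hand, Theorem~\ref{theorem:local} immediately produces the uniform estimate
\begin{equation*}
\sup_{x\in \mathbb{R}} \bigl|\sigma_n\, A^{*}_{n,\,\floor{\mu_n + x\sigma_n}} - \phi(x)\bigr| \longrightarrow 0,
\end{equation*}
which is the statement of Corollary~\ref{Anwendung}. There is no genuine obstacle here: the substantive analytic content has already been discharged in Theorem~\ref{CLT}, Proposition~\ref{prop1}, and the Chebyshev identity; this corollary is a packaging step whose only subtlety is checking that the log-concave hypothesis applies so that the local limit conclusion extends to the full real line rather than merely to $\{|x|\geq \varepsilon\}$.
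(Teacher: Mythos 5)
Your proposal is correct and follows essentially the same route as the paper: asymptotic normality from Theorem \ref{CLT}, $\sigma_n^2 \to \infty$ from Corollary \ref{limits}, and log-concavity of $(A_{n,k})_k$ via the real-rootedness of $Q_n(x)=x\,U_{n-1}(x/2+1)$, all fed into the log-concave branch of Theorem \ref{theorem:local}. Your verification of the log-concavity hypothesis (roots of $U_{n-1}$ in $(-1,1)$, hence non-positive real roots of $Q_n$, hence Newton's inequalities) just makes explicit what the paper asserts in one line.
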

\begin{corollary} \label{local}
Further, we have the asymptotic formula
\begin{equation*}
A_{
n,k
} \,\, \sim \,\, F_{2n} \frac{ \phi(x)}{\sigma_n
}  \text{ for } n \to \infty,
\end{equation*}
where $k= \mu_n + x \sigma_n$ and $x$ is bounded.
\end{corollary}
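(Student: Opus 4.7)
The plan is to deduce Corollary~\ref{local} directly from Corollary~\ref{Anwendung} by substituting the explicit normalization provided by Lemma~\ref{lem1}. First, I would recall that $A_{n,k}^{*} = A_{n,k}/\sum_{m=0}^n A_{n,m}$ and use Lemma~\ref{lem1} to rewrite this as $A_{n,k}^{*} = A_{n,k}/F_{2n}$. Corollary~\ref{Anwendung} then reads
\begin{equation*}
\lim_{n \to \infty} \sup_{x \in \mathbb{R}} \left| \sigma_n \, \frac{A_{n,\floor{\mu_n + x\,\sigma_n}}}{F_{2n}} - \phi(x) \right| = 0.
\end{equation*}
Rearranging, this is equivalent to
\begin{equation*}
A_{n,\floor{\mu_n + x\,\sigma_n}} = \frac{F_{2n}}{\sigma_n}\bigl(\phi(x) + o(1)\bigr)
\end{equation*}
uniformly in $x \in \mathbb{R}$.

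Next, I would pass from the floor expression $\floor{\mu_n + x\,\sigma_n}$ to integer indices $k$ by setting $x := (k-\mu_n)/\sigma_n$, in which case $\mu_n + x\,\sigma_n = k$ is already an integer and the floor is redundant. The asymptotic statement then becomes
\begin{equation*}
A_{n,k} \;=\; \frac{F_{2n}}{\sigma_n}\bigl(\phi(x) + o(1)\bigr),
\end{equation*}
with the $o(1)$ uniform in $x$.

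Finally, to upgrade this to the claimed asymptotic equivalence $A_{n,k} \sim F_{2n}\,\phi(x)/\sigma_n$, I would restrict to $x$ in a bounded set $|x|\leq M$. On such a set $\phi(x) \geq \phi(M) > 0$ is bounded away from zero, so the uniform additive error $o(1)$ translates into a multiplicative error:
\begin{equation*}
\frac{A_{n,k}}{F_{2n}\,\phi(x)/\sigma_n} \;=\; 1 + \frac{o(1)}{\phi(x)} \;\longrightarrow\; 1,
\end{equation*}
uniformly in $|x|\leq M$. I do not anticipate a genuine obstacle here: the whole statement is a packaging of Corollary~\ref{Anwendung} together with Lemma~\ref{lem1}, and the only point requiring care is the passage from the uniform additive approximation (valid on all of $\mathbb{R}$) to a multiplicative asymptotic (valid only where $\phi(x)$ is bounded below, which is precisely why the hypothesis restricts $x$ to a bounded range).
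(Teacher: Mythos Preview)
Your proposal is correct and matches the paper's approach: the paper states Corollary~\ref{local} immediately after Corollary~\ref{Anwendung} without a separate proof, treating it as a direct consequence of the local limit theorem together with $Q_n(1)=F_{2n}$ from Lemma~\ref{lem1}. Your write-up simply fills in the details the paper leaves implicit, including the correct observation that boundedness of $x$ is needed to convert the uniform additive $o(1)$ into a multiplicative asymptotic.
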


\section{Singularity analysis}
Bender \cite{Be73} provided a criterion for a double sequence $a
\left( {n,k}\right) $ to be
asymptotically normal.
Typically the criterion applies, if
the generating function has only one singularity on the circle of convergence. 
We recall the approach 
by Bender (see also \cite{Ca15}, section 3.6 Method 4).

\begin{theorem}[Bender]
Let $f(x,t)= \sum_{k,n} \, a(n,k) \, x^k \, t^n$, with $a(n,k)\geq 0$.
Suppose there exist
\begin{itemize}
\item[(i)] a function $A(s)$ continuous and non-zero near $0$,
\item[(ii)] a function $r(s)$ with bounded third derivative near $0$,
\item[(iii)] a non-negative integer $m$, and
\item[(iv)] positive numbers $\varepsilon$ and $\delta$ such that
\begin{equation*}
\left( 1 - \frac{t}{r(s)}\right)^m \, f\left( e^s,t\right) - \frac{A(s)}{1-\frac{t}{r(s)}}
\end{equation*}
is analytic and bounded for $\vert s \vert < \varepsilon$, $\vert t \vert < r(0) + \delta$.
\end{itemize}
Put $a= - r'(0)/r(0)$ and $b^2= a^2 - r''(0)/r(0)$. If $b^2 \neq 0$, then the numbers $a(n,k)$
are asymptotically normal with $a_n= n \, a$ and $b_n^2 = b^2 \, n$.
\end{theorem}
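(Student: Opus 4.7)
The plan is to combine bivariate singularity analysis with the moment-generating-function route to the Central Limit Theorem. Writing $p_n(x) := [t^n] f(x,t) = \sum_k a(n,k)\,x^k$ and letting $X_n$ be the integer-valued random variable with $P(X_n=k) = a(n,k)/p_n(1)$, the MGF of $X_n$ is $M_n(s) = p_n(e^s)/p_n(1)$. The goal then reduces to showing that the MGF of the centered and scaled variable $(X_n-na)/(b\sqrt{n})$ converges pointwise to $e^{s^2/2}$; the L\'evy / Curtiss continuity theorem then yields convergence in distribution and hence asymptotic normality with the stated $a_n$ and $b_n$.

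The first key step is to extract uniform asymptotics of $p_n(e^s)$ for $s$ in a real neighborhood of $0$. Rearranging hypothesis (iv) one has
$$f(e^s,t) = \frac{A(s)}{\bigl(1 - t/r(s)\bigr)^{m+1}} + \frac{H(s,t)}{\bigl(1 - t/r(s)\bigr)^{m}},$$
where $H(s,t)$ is jointly analytic and bounded on the polydisc $|s|<\varepsilon$, $|t|<r(0)+\delta$. A Cauchy contour integral of $f(e^s,t)/t^{n+1}$ over the circle $|t| = r(0)+\delta/2$, inside which (for small $s$) the only singularity is the pole at $t=r(s)$, collects a residue contribution from this pole and leaves a remainder of order $(r(0)+\delta/2)^{-n}$; this gives
$$p_n(e^s) = A(s)\binom{n+m}{m}r(s)^{-n}\bigl(1 + O(\theta^n)\bigr)$$
uniformly for $|s|<\varepsilon$, for some $\theta<1$. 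Since $A(0)\neq 0$ by (i), taking the ratio yields
$$M_n(s) = \frac{A(s)}{A(0)}\left(\frac{r(0)}{r(s)}\right)^{\!n}\bigl(1+o(1)\bigr).$$

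The final step is a third-order Taylor expansion of the cumulant generating function. Using the bounded third derivative of $r$ from (ii),
$$-\log\frac{r(s)}{r(0)} = a\,s + \tfrac{1}{2}\,b^2\,s^2 + O(s^3),$$
with $a = -r'(0)/r(0)$ and $b^2 = a^2 - r''(0)/r(0)$. Substituting $s \mapsto s/(b\sqrt{n})$ and subtracting the centering term $(a\sqrt{n}/b)\,s$ gives
$$\log M_n\!\bigl(s/(b\sqrt{n})\bigr) - \frac{a\sqrt{n}}{b}\,s = \frac{s^2}{2} + O\bigl(n^{-1/2}\bigr),$$
so the MGF of $(X_n - na)/(b\sqrt{n})$ converges pointwise to $e^{s^2/2}$ as required. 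The main obstacle is ensuring that every estimate is uniform in the auxiliary parameter $s$: one must verify that $t = r(s)$ really is the unique dominant singularity of $f(e^s,\cdot)$ throughout $|s|<\varepsilon$, and that all $O$-constants can be chosen independently of $s$. Hypothesis (iv) is precisely what makes both the residue extraction and the remainder bound uniform, while the bounded $r'''$ controls the Taylor remainder in the cumulant expansion. The assumption $b^2 \neq 0$ is then needed so that the scaling by $b\sqrt{n}$ produces a non-degenerate Gaussian limit rather than a point mass.
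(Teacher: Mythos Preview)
The paper does not prove this theorem. It is quoted verbatim as a known result of Bender \cite{Be73} (with a pointer to \cite{Ca15}, Section~3.6) and is then \emph{applied}, in the subsection that follows, to establish Theorem~\ref{th1} for the Morgan--Voyce coefficients. So there is no proof in the paper to compare your attempt against.

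Your sketch is nonetheless a correct outline of how Bender's theorem is actually proved: extract, uniformly in $s$, the asymptotics of $p_n(e^s)=[t^n]f(e^s,t)$ from the dominant pole at $t=r(s)$, form the ratio $M_n(s)=p_n(e^s)/p_n(1)$, and Taylor-expand $-\log(r(s)/r(0))$ to second order to identify the Gaussian limit via Curtiss' continuity theorem. Bender's original argument uses characteristic functions rather than real moment generating functions, but the analytic content is the same.

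One small inaccuracy worth flagging: the error term in your displayed asymptotic for $p_n(e^s)$ is not $O(\theta^n)$ but only $O(1/n)$. When you deform the Cauchy integral to $|t|=r(0)+\delta/2$, the contour integral is indeed exponentially small, but the \emph{residue} at $t=r(s)$ receives a contribution not only from $A(s)/(1-t/r(s))^{m+1}$ but also from $H(s,t)/(1-t/r(s))^{m}$; the latter contributes a term of order $n^{m-1}r(s)^{-n}$, which is $O(1/n)$ relative to the main term $A(s)\binom{n+m}{m}r(s)^{-n}\sim A(s)\,n^{m}r(s)^{-n}/m!$. This does not damage the argument, since a uniform $1+O(1/n)$ factor still tends to $1$ after the substitution $s\mapsto s/(b\sqrt{n})$ and cancels in the ratio defining $M_n$.
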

\subsection{Proof of Theorem \ref{th1}}
Let $a(n,k)=A_{n,k}$. Let $G(t)= \sum_{n=1}^{\infty} n \, t^n$. This power series
has radius of convergence 
$1$ and has a pole at $t=1$.
The generating series of $A_{n,k}$ (we refer to \cite{HNT20}) is provided by
\begin{equation*}
f(x,t)= \frac{1}{1-x\, G(t)}= \sum_{n=0}^{\infty} Q_n(x) \, t^{n}.
\end{equation*}
We have $G(t)= \frac{t}{(t-1)^2}$, thus
$G(t)=1$ has two real solutions. 
The smallest in absolute value is given by 
$t_1= \frac{3- \sqrt{5}}{2}$. 

Suppose $x\neq 0$. If $G\left( t\right) =\frac{1}{x}$, then
$xt=\left( 1-t\right) ^{2}$ i.~e.\
$t^{2}-\left( 2+x\right) t+1=0$. Therefore,
$t=1+\frac{x}{2}-\sqrt{\frac{x^{2}}{4}+x}$ with the principle
branch of the square root for $\left| x-1\right| <1$.
Let
$r\left( s\right) =1+\frac{e^{
s}}{2}-\sqrt{\frac{e^{
2s}}{4}+e^{
s}}$
for $\left| s\right| <\ln 2$.
Then the pole of $f\left(
x,t\right) $ closest to $0$ is
located at $r\left( s\right) $. The other pole is located
at
$1/r\left( s\right) =1+\frac{e^{
s}}{2}+\sqrt{\frac{e^{
2s}}{4}+e^{
s}}$.

Since
\begin{eqnarray*}
f\left( e^{s}
,t\right) &=&\frac{1}{1-e^{s}
G\left( t
\right) }=\frac{\left( 1-
t
\right) ^{2}}{1-\left( 2+
e^{s}\right)
t+t
^{2}}=1+\frac{
e^{s}t
}{\left( 1-r\left( s\right) t
\right) \left( 1-t/r\left( s\right)
\right) }\\
&=&1+\frac{e^{s}}{2
\sqrt{\frac{e^{
2s}}{4}+
e^{
s}}}\left(
\frac{1}{1-t/r\left( s\right)
}-
\frac{1}{1-
r\left( s\right)
t}\right),
\end{eqnarray*}
we obtain
\begin{equation}
f\left( 
e^{s},t\right) -
\frac{e^{s}}{2\sqrt{\frac{e^{
2s}}{4}+e^{
s}}}\frac{1
}{1-t/r\left( s\right) }=1-
\frac{e^{s}}{2\sqrt{\frac{e^{
2s}}{4}+e^{
s}}}
\frac{1}{
1-
r\left( s\right) t}.
\label{eq:analytisch}
\end{equation}
For $\left| s\right| <\ln 2$ holds
$\left| r\left( s\right) \right| <\frac{1}{2}$. Therefore,
$\left| 1/r\left( s\right) \right| >2$ and
(\ref{eq:analytisch}) is
analytic and bounded for $\left| t\right| <\frac{3}{2}$.

We obtain
$r^{\prime }\left( s\right) =\frac{e^{
s}}{2}-\frac{1}{2}\left( \frac{e^{
2s}}{4}+e^{
s}\right) ^{-1/2}\left( \frac{e^{
2s}}{2}
+e^{
s}\right) $ and 
$$r^{\prime \prime }\left( s\right) =\frac{e^{
s}}{2}+\frac{1}{4}\left( \frac{e^{
2s}}{4}+e^{
s}\right) ^{-3/2}\left( \frac{e^{
2s}}{2}
+e^{
s}\right) ^{2}-\frac{1}{2}\left( \frac{e^{
2s}}{4}+e^{
s}\right) ^{-1/2}\left( e^{
2s}+e^{
s}\right). $$
Further,
\begin{eqnarray*}
r^{\prime \prime \prime }\left( s\right) &=&\frac{e^{
s}}{2}-\frac{3}{8}\left( \frac{e^{
2s}}{4}+e^{
s}\right) ^{-5/2}\left(
\frac{e^{
2s}}{2}
+e^{
s}\right) ^{3}\\
&&{}+\frac{1}{2}\left( \frac{e^{
2s}}{4}+e^{
s}\right) ^{-3/2}\left(
\frac{e^{
2s}}{2}
+e^{
s}\right) \left( e^{
2s}+e^{
s}\right) \\
&&{}+\frac{1}{4}\left( \frac{e^{
2s}}{4}+e^{
s}\right) ^{-3/2}\left( e^{
2s}+e^{
s}\right) ^{2}\\
&&{}-\frac{1}{2}\left( \frac{e^{
2s}}{4}+e^{
s}\right) ^{-1/2}\left(
2e^{
2s}+
e^{
s}\right),
\end{eqnarray*}
which shows that the third derivative is bounded near $0$,
$r^{\prime }\left( 0\right) =
\frac{1}{2}-
\frac{1}{2}\frac{2}{\sqrt{5}}\frac{3}{2}=
\frac{1}{2}-
\frac{3}{2\sqrt{5}}$, and
$r^{\prime \prime }\left( 0\right) =\frac{1}{2}+\frac{1}{4}\frac{8}{5\sqrt{5}}\frac{9}{4}-\frac{2}{\sqrt{5}}=\frac{1}{2}-\frac{11}{10\sqrt{5}}$.
With
$r\left( 0\right) =\frac{3}{2}-\frac{\sqrt{5}}{2}=\left( \frac{3}{2\sqrt{5}}-\frac{1}{2}\right) \sqrt{5}$. This yields
$a= \frac{1}{\sqrt{5}}$.
As $1/r\left( 0\right) =\frac{3}{2}+\frac{\sqrt{5}}{2}$, we obtain
$$b=\frac{1}{5}-\left( \frac{3}{2}+\frac{\sqrt{5}}{2}\right) \left( \frac{1}{2}-\frac{11}{10\sqrt{5}}\right) =\frac{1}{5}-\frac{3}{4}
+\frac{11}{20}-
\frac{\sqrt{5}}{4}+\frac{33}{20\sqrt{5}}=\frac{2}{5\sqrt{5}}.$$
\section{The Berry--Esseen theorem}
Let $X \in \{0,1, \ldots, n\}$ be a random variable 
with expected value $\mathbb{E}(X)$ and variance $\mathbb{V}(X)$ for $n \in \mathbb{N}$.
Let $P_{X} (x)= \sum_{k=0}^{n}
P(X=k) \, x^{k}
$ be the probability generating function.
A straightforward calculation leads to
\begin{eqnarray*}
\mathbb{E}(X) & = & P'(1), \\
\mathbb{V}(X) & = & P''(1) - \left( P'(1)\right)^2 + P'(1).
\end{eqnarray*}
There is a Central Limit Theorem for a sequence of independent, but not necessarily identically distributed
random variables. We work this out in the setting of a triangular array of Bernoulli random variables.
To control the rate of convergence,
we utilize a Berry--Esseen theorem (we refer to \cite{Ca15}, section 3).
%
\begin{theorem}\label{Berry:abstract}
Let $X_{n,k}$ for $1 \leq k \leq n$ be independent random variables 
with expected values $\mu_{n,k}$, variances $\sigma_{n,k}^2$ and
absolute third central moment 
$$\rho_{n,k} = \mathbb{E} (\vert X_{n,k} - \mu_{n,k} \vert^3) < \infty.$$ 
Let
$\mu_n = \sum_{k=1}^n \mu_{n,k}$, $\sigma_n^2:= \sum_{k=1}^n \sigma_{n,k}^2$ and $S_n:= \sum_{k=1}^n X_{n,k}$.
Let $S_n^{*}= (S_n - \mu_n)/\sigma_n$. Then
\begin{equation*}
{\norm{P(S_n^{*} < x) - \Phi(x)}}_{\mathbb{R}} \leq C \,\,
\frac{\sum_{k=1}^n \rho_{n,k}}{\sigma_n^3},
\end{equation*}
where $C >0$ is a universal constant. This constant can be chosen as $C=0.7975$ \cite{VB72}.
\end{theorem}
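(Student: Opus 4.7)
The plan is to follow the classical Fourier-analytic route via Esseen's smoothing inequality. First I would invoke: for any distribution function $F$ with characteristic function $\psi$ and the standard normal $\Phi$ with characteristic function $e^{-t^2/2}$ and density bounded by $1/\sqrt{2\pi}$, one has for every $T>0$
$$\sup_x |F(x) - \Phi(x)| \leq \frac{1}{\pi}\int_{-T}^T \left|\frac{\psi(t) - e^{-t^2/2}}{t}\right| dt + \frac{c_0}{T},$$
with an explicit $c_0$. Applied to $F(x)=P(S_n^*<x)$, this reduces the problem to controlling $|\psi_n(t) - e^{-t^2/2}|$ on a finite interval and optimizing $T$.

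To estimate the characteristic function of $S_n^*$, write $Y_{n,k}:=(X_{n,k}-\mu_{n,k})/\sigma_n$, so by independence $\psi_n(t) = \prod_{k=1}^n \psi_{n,k}(t)$, where $\psi_{n,k}(t)=\mathbb{E}[e^{itY_{n,k}}]$. Using $\mathbb{E}(Y_{n,k})=0$, $\mathbb{E}(Y_{n,k}^2)=\sigma_{n,k}^2/\sigma_n^2$, and Taylor's theorem with integral remainder, I would derive
$$\left|\psi_{n,k}(t) - 1 + \frac{\sigma_{n,k}^2 t^2}{2\sigma_n^2}\right| \leq \frac{\rho_{n,k}|t|^3}{6\sigma_n^3}.$$
Setting $\beta_n:=\sum_k \rho_{n,k}/\sigma_n^3$, which is precisely the quantity on the right-hand side of the claim, I then compare $\psi_n(t)$ with $e^{-t^2/2}=\prod_k \exp(-\sigma_{n,k}^2 t^2/(2\sigma_n^2))$ using the telescoping inequality $|\prod a_k - \prod b_k|\leq \sum_k |a_k-b_k|$ for factors of modulus at most $1$, together with $|e^z-1-z|\leq |z|^2/2$ for $\re z\leq 0$ and Lyapunov's inequality $\sigma_{n,k}^2\leq \rho_{n,k}^{2/3}$. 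This yields, for $|t|\leq c_1/\beta_n$, a bound of the form
$$|\psi_n(t) - e^{-t^2/2}| \leq K\,\beta_n\,|t|^3\,e^{-t^2/3}$$
with an explicit $K$. Substituting into Esseen's inequality and choosing $T=c_2/\beta_n$, the $t$-integral contributes a constant times $\beta_n$ and the boundary term $c_0/T$ contributes another constant times $\beta_n$, adding to give $C\,\beta_n$ as required.

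The main obstacle is the sharp constant $C=0.7975$ attributed to van Beek. The argument sketched above delivers \emph{some} universal constant, but textbook executions typically yield values in the range $3$--$6$. Pushing the value down to $0.7975$ requires delicate bookkeeping at every stage: a sharpened form of Esseen's lemma (optimizing over the smoothing kernel), a refined Taylor remainder that interpolates between second- and third-moment information, and a more careful Gaussian domination of $\psi_n$ over the full range $|t|\leq T$ rather than only $|t|\leq c_1/\beta_n$. Rather than reproduce van Beek's optimization in detail, I would establish the inequality with a concrete but non-optimal constant, note that the Berry--Esseen machinery for non-identically-distributed summands is classical, and cite \cite{VB72} for the sharpened value $0.7975$ quoted in the statement.
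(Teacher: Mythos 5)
The paper does not prove this statement at all: Theorem \ref{Berry:abstract} is quoted as a classical result (the Berry--Esseen theorem for independent, not identically distributed summands), with the sharp constant $C=0.7975$ attributed to van Beek \cite{VB72} and the general framework to \cite{Ca15}. So there is no internal proof to compare against, and your decision to establish the inequality with some explicit universal constant and cite \cite{VB72} for the value $0.7975$ is exactly in the spirit of what the paper itself does; if anything, you supply more detail than the authors. Your Fourier-analytic outline is the standard one and is sound in its main steps (Esseen smoothing, Taylor expansion of the individual characteristic functions with third-moment remainder, comparison with the Gaussian product, choice $T\asymp 1/\beta_n$).

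One technical point to tighten if you flesh this out: the plain telescoping bound $\bigl|\prod_k a_k-\prod_k b_k\bigr|\le\sum_k|a_k-b_k|$ for factors of modulus at most $1$ gives you a bound of order $\beta_n|t|^3$ (plus a $t^4$ term) \emph{without} the Gaussian damping factor, and with that alone the integral $\int_{-T}^{T}|\cdot|/|t|\,dt$ grows like $\beta_n T^2\asymp 1/\beta_n$ rather than staying of order $\beta_n$. To obtain the asserted bound $|\psi_n(t)-e^{-t^2/2}|\le K\beta_n|t|^3e^{-t^2/3}$ on $|t|\le c_1/\beta_n$ you must retain Gaussian-type bounds on the partial products, e.g.\ use $\bigl|\prod a_k-\prod b_k\bigr|\le\sum_k|a_k-b_k|\prod_{j<k}|a_j|\prod_{j>k}|b_j|$ together with $|\psi_{n,k}(t)|\le\exp\bigl(-\sigma_{n,k}^2t^2/(2\sigma_n^2)+\rho_{n,k}|t|^3/(6\sigma_n^3)\bigr)$ (this is the content of the standard lemma in Feller or Petrov). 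With that repair your argument delivers a universal constant, and the citation of \cite{VB72} for $0.7975$ matches the paper's own treatment.
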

\subsection{Harper's method}
Let $n \in \mathbb{N}$.
Let $P_n(x)= \sum_{k=0}^n a_{n,k} \, x^k$ 
be a monomic polynomial of degree $n$, $ a_{n,k} \geq 0$ and $P_n(1)>0$.
Suppose the roots of $P_n(x)$ are real and $P_n(x) = \prod_{k=1}^n \left( x + r_k \right)$.
Harper \cite{Ha67} introduced a triangular array of Bernoulli random variables $X_{n,j}$ with
distribution 
\begin{equation*}
P(X_{n,j}=0):= \frac{r_j}{1 + r_j} \text{ and }
P(X_{n,j}=1):= \frac{1}{1 + r_j}.
\end{equation*}
Let $S_n:= \sum_{j=1}^n X_{n,j}$. Then $P(S_n=k) = \frac{a_{n,k}}{P_n(1)}$.
\begin{lemma} \label{estimate}
Let $X_{n,j}$ be given. Then
\begin{equation*} 
\mathbb{E}(X_{n,j}) = \frac{r_j}{1+r_j}, \,\, \, 
\mathbb{V}(X_{n,j}) = \frac{r_j}{( 1 + r_j)^2},\, \,\,
\mathbb{E} ( \vert X_{n,j} - \mathbb{E}(X_{n,j})\vert^3) =  \frac{r_j(1+ r_j^2)}{(1 + r_j)^4}.
\end{equation*}
\end{lemma}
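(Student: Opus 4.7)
The plan is a direct verification from the definition of the Bernoulli random variable $X_{n,j}$, which takes only the two values $0$ and $1$. No generating function argument or limit is required; the entire lemma reduces to elementary algebra in the quantities $p_j := 1/(1+r_j)$ and $q_j := r_j/(1+r_j) = 1-p_j$.

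For the mean, I would apply the definition $\mathbb{E}(X_{n,j}) = 0 \cdot P(X_{n,j}=0) + 1 \cdot P(X_{n,j}=1)$ and substitute the given probabilities. For the variance, I would exploit the identity $X_{n,j}^{\,2} = X_{n,j}$, valid for any $\{0,1\}$-valued random variable, so that $\mathbb{V}(X_{n,j}) = \mathbb{E}(X_{n,j}) - \mathbb{E}(X_{n,j})^{2} = p_j q_j$; collecting over the common denominator $(1+r_j)^2$ yields $r_j/(1+r_j)^2$.

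For the absolute third central moment, writing $\mu = \mathbb{E}(X_{n,j})$ I would evaluate
\[
\mathbb{E}\bigl( |X_{n,j} - \mu|^{3} \bigr) = |0 - \mu|^{3}\, q_j + |1 - \mu|^{3}\, p_j = \mu^{3} q_j + (1-\mu)^{3} p_j.
\]
Because $\mu$ and $p_j$ agree up to which label is attached to $0$ versus $1$, this collapses to the symmetric expression $p_j q_j\,(p_j^{2} + q_j^{2})$. Substituting $p_j$ and $q_j$ gives $p_j q_j = r_j/(1+r_j)^{2}$ and $p_j^{2} + q_j^{2} = (1 + r_j^{2})/(1+r_j)^{2}$, whose product is the claimed $r_j(1+r_j^{2})/(1+r_j)^{4}$.

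There is no substantive obstacle here: the statement is essentially a bookkeeping exercise. The only minor point worth tracking is the symmetric decomposition $p^{3}q + p q^{3} = pq(p^{2}+q^{2})$ used in the third-moment computation, which then simplifies cleanly because $p_j$ and $q_j$ share the denominator $1+r_j$. The value of the lemma lies not in its proof but in setting up the three quantities $\mu_{n,j}$, $\sigma_{n,j}^{2}$, $\rho_{n,j}$ that will feed directly into Theorem \ref{Berry:abstract} to deliver the Berry--Esseen bound of Theorem \ref{BEapplication}.
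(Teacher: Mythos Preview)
Your approach is correct and is precisely what the paper has in mind; the paper in fact states Lemma~\ref{estimate} without proof, treating it as an immediate computation, so there is no alternative argument to compare against.

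One small point worth flagging: carrying out your first step literally with the paper's definitions $P(X_{n,j}=0)=r_j/(1+r_j)$ and $P(X_{n,j}=1)=1/(1+r_j)$ yields $\mathbb{E}(X_{n,j})=1/(1+r_j)$, not the $r_j/(1+r_j)$ printed in the lemma. This is evidently a typo in the paper; your phrase ``up to which label is attached to $0$ versus $1$'' quietly absorbs it. Since the variance $p_jq_j$ and the third absolute central moment $p_jq_j(p_j^{2}+q_j^{2})$ are symmetric in $p_j$ and $q_j$, those two formulas are correct as stated, and they are the only quantities actually used in the Berry--Esseen application that follows.
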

This implies that $\mathbb{E} ( \vert X_{n,j} - \mathbb{E}(X_{n,j})\vert^3) < \mathbb{V}(X_{n,j})$.
\begin{proof}[Proof of Theorem \ref{BEapplication}] \ \\
Due to Lemma, \ref{estimate} we obtain
\begin{equation*}
\frac{\sum_{k=1}^n \rho_{n,k}}{\sigma_n^3} \leq \frac{\sum_{k=1}^n \sigma_{n,k}^2}{\sigma_n^3} = \frac{1}{\sigma_n}.
\end{equation*}
Finally, Theorem \ref{Berry:abstract} gives the result. 
\end{proof}
\subsection{Expected values $\mu_n$ and variances $\sigma_n^2$}
The family of polynomials $\{Q_n(x)\}_n$, where  $$Q_n(x)=\sum_{k=0}^n A_{n,k}\, x^k,$$
satisfy the three term recurrence 
relation (\ref{3term}). Then
$u_{n}=Q_{n}
\left( 1\right) $ satisfies the recurrence
relation 
\begin{equation}\label{eq:homogen1}
u_{n}-3u_{n-1}+u_{n-2}=0. \end{equation}

\begin{proof}[Proof of Lemma \ref{lem1}.]
The Fibonacci numbers
$F_{n}$ satisfy the recurrence relation
$F_{n}-F_{n-1}-F_{n-2}=0$. Therefore,
$F_{n-1}=F_{n}-F_{n-2}=F_{n+1}-2F_{n-1}+F_{n-3}$, which implies
$F_{n+1}-3F_{n-1}+F_{n-3}=0$. Therefore, the sequences constituted by the
$F_{2n}$ or $F_{2n+1}$ satisfy the same recurrence relation as $u_{n}$.
Now $u_{1}=1=F_{2}$ and $u_{2}=3=F_{4}$, so we obtain $u_{n}=F_{2n}$ for
$n\geq 1$.
\end{proof}
Next we prove the formulas for $\mu_n$ and $\sigma_n^2$.
\begin{proof}[Proof of Proposition \ref{prop1}.]
To determine $\mu_n$ and $\sigma_n$, we follow the strategy
offered by the standard method of solving linear non-homogeneous
difference equation (we refer to \cite{El05}, Section 2.4).
For $v_{n}=
Q_{n}
^{\prime }\left( 1\right) $ holds
$v_{n}-3v_{n-1}+v_{n-2}=u_{n-1}=F_{2n-2}$. The sequences constituted by
$F_{2n}$ or $F_{2n+1}$, resp., are a linearly independent solution of the
homogeneous difference equation (\ref{eq:homogen1}). 
Therefore, the solution
of the inhomogeneous difference equation
\begin{equation*}
v_{n}-3v_{n-1}+v_{n-2}=u_{n-1}.
\label{eq:inhomogen1}
\end{equation*}
is a linear
combination of $F_{2n}$, $nF_{2n}$, $F_{2n+1}$, $nF_{2n+1}$. From the initial
conditions $v_{1}=1$, $v_{2}=4$, $v_{3}=14$, and $v_{4}=46$,
we can determine the coefficients and obtain
$v_{n}=\frac{2}{5}nF_{2n+1}+\frac{2}{5}F_{2n}-\frac{1}{5}nF_{2n}$ for $n\geq 1$
and observe that this also holds for $n=0$.

Let $w_{n}=
Q_{n}
^{\prime \prime }\left( 1\right) $. Then
$w_{n}-3w_{n-1}+w_{n-2}=v_{n-1}$ and
$N\left( E\right) =\left( p\left( E\right) \right) ^{2}$ is an annihilator of
the right hand side. Since the sequences constituted by $F_{2n}$ and $F_{2n+1}$
are a fundamental system of solutions of (\ref{eq:homogen1}), we obtain a
fundamental system of solutions of the inhomogeneous one by $F_{2n}$,
$F_{2n+1}$, $nF_{2n}$, $nF_{2n+1}$, $n^{2}F_{2n}$, and $n^{2}F_{2n+1}$. The
coefficients can be determined from the values $w_{1}=0$, $w_{2}=2$,
$w_{3}=14$, $w_{4}=68$, $w_{5}=282$, and $w_{6}=1068$ and we obtain
$w_{n}=\left( \frac{1}{5}n^{2}-\frac{1}{25}n-\frac{8}{25}\right) F_{2n}+\frac{2}{25}n F_{2n+1}$.
Recall that
\begin{equation*}
\label{identitaet}F_{2n}^{2}+F_{2n}F_{2n+1}-F_{2n+1}^{2}=-1 \text{  for } n\geq 0.
\end{equation*}
This leads to the explicit formula for $\sigma_n^2$.
\end{proof}
\section{Location of the modes: proof of Theorem \ref{exact}}
\begin{proof}
Let
$A_{n,m}
=\binom{n+m-1}{2m-1}$.
Therefore,
$
A_{n,m}
-
A_{n,m+1}
=\binom{n+m-1}{2m-1}\left( 1-\frac{\left( n+m\right) \left( n-m\right) }{\left( 2m+1\right) 2m}
\right) $.
For the numerator of the expression in brackets, we obtain
$5m^{2}+2m-n^{2}=5\left( m+\frac{1}{5}\right) ^{2}-\frac{1}{5}-n^{2}$.
Therefore, $
A_{n,m}
>
A_{n,m+1}
$ for
$m
>
\frac{\sqrt{5n^{2}+1}-1}{5}$ and $
A_{n,m-1}
<
A_{n,m
}
$
for $m<\frac{\sqrt{5n^{2}+1}-1}{5}+1$.

Obviously, for $n^{2}=5m^{2}+2m$ we have
two modes. This equation is equivalent to
$5n^{2}=\left( 5m+1\right) ^{2}-1$. With $j=5m+1$, we obtain the Pell--Fermat
equation $j^{2}-5n^{2}=1$. All its non-negative solutions are $\left(
\begin{array}{c}
j \\
n
\end{array}
\right) =\left(
\begin{array}{cc}
9 & 20 \\
4 & 9
\end{array}
\right) ^{k^{\prime }}\left(
\begin{array}{c}
1 \\
0
\end{array}
\right) $ for $k^{\prime }\geq 0$. To be a solution to the original problem, the integer $j$ must satisfy
$j\equiv 1\mod 5$. Exactly even powers $k^{\prime }$ yield such a solution.
\end{proof}
\begin{proof}[Proof of Theorem \ref{CLT}]
To apply Theorem \ref{BEapplication}, we need to show that 
the variance $\sigma_n^2$ proceeds to infinity.
But this follows from Corollary \ref{limits}.
\end{proof}

\section{Local limit theorem: numerical data}
We consider Corollary \ref{local} for $k= a_n + b_n$.
Here $a_n = n \, a$ and $b_n = \sqrt{n} \,\, b$, where we approximated
$\mu_n$ by $a_n$ and $\sigma_n$ by $b_n$. This leads to
\begin{equation*}
\binom{n+ \floor{\frac{n}{\sqrt{5}}}-1}{
2 \, \floor{\frac{n}{\sqrt{5}}}-1} \,\, \sim \,\, 
\frac{ 5^{\frac{3}{4}}}{ 2 \sqrt{\pi}
\,\, \sqrt{n}} \, F_{2n}.
\end{equation*}
A local Berry--Esseen result
would suggest a rate of convergence by $1/ \, \sqrt{n}$.
Table \ref{table} gives some evidence.
\begin{table}[H]
\[
\begin{array}{rrr|rrr|rrr}
\hline
n&&&n&&&n&&\\ \hline \hline
2&0&1.4\cdot 10^{0}&20&1.7\cdot 10^{-1}&8.7\cdot 10^{-1}&200&6.6\cdot 10^{-2}&1.0\cdot 10^{-1}\\
3&3.7\cdot 10^{-1}&5.3\cdot 10^{-1}&30&1.6\cdot 10^{-1}&2.4\cdot 10^{-1}&300&5.4\cdot 10^{-2}&3.1\cdot 10^{-2}\\
4&1.9\cdot 10^{-1}&1.1\cdot 10^{0}&40&1.3\cdot 10^{-1}&5.8\cdot 10^{-1}&400&4.6\cdot 10^{-2}&1.9\cdot 10^{-1}\\
5&3.6\cdot 10^{-1}&3.0\cdot 10^{-1}&50&1.3\cdot 10^{-1}&1.6\cdot 10^{-1}&500&4.2\cdot 10^{-2}&9.9\cdot 10^{-2}\\
6&2.4\cdot 10^{-1}&9.0\cdot 10^{-1}&60&1.1\cdot 10^{-1}&4.4\cdot 10^{-1}&600&3.8\cdot 10^{-2}&4.2\cdot 10^{-2}\\
7&3.3\cdot 10^{-1}&1.6\cdot 10^{-1}&70&1.1\cdot 10^{-1}&1.1\cdot 10^{-1}&700&3.5\cdot 10^{-2}&1.0\cdot 10^{-2}\\
8&2.5\cdot 10^{-1}&6.6\cdot 10^{-1}&80&1.0\cdot 10^{-1}&3.4\cdot 10^{-1}&800&3.3\cdot 10^{-2}&1.1\cdot 10^{-1}\\
9&3.0\cdot 10^{-1}&7.5\cdot 10^{-2}&90&9.8\cdot 10^{-2}&8.1\cdot 10^{-2}&900&3.1\cdot 10^{-2}&5.6\cdot 10^{-2}\\
10&2.5\cdot 10^{-1}&4.7\cdot 10^{-1}&100&9.1\cdot 10^{-2}&2.8\cdot 10^{-1}&1000&2.9\cdot 10^{-2}&2.1\cdot 10^{-2}\\ \hline
\end{array}
\]
\caption{\label{table} 
Values of $\frac{1}{F_{2n}}\binom{n+\left\lfloor \frac{n}{\sqrt{5}}\right\rfloor -1}{2\left\lfloor \frac{n}{\sqrt{5}}\right\rfloor -1}$ (second columns) and $\left| \frac{2\sqrt{\pi }\sqrt{n}}{5^{\frac{3}{4}}F_{2n}}\binom{n+\left\lfloor \frac{n}{\sqrt{5}}\right\rfloor -1}{2\left\lfloor \frac{n}{\sqrt{5}}\right\rfloor -1}-1\right| \sqrt{n}$ (third columns).
}
\end{table}

\end{document}